\newtheorem{theorem}{Theorem}
\theoremstyle{definition}
\newtheorem{example}[theorem]{Example}
\newtheorem{lemma}{Lemma}
\newtheorem{remark}{Remark}
\newtheorem{corollary}{Corollary}
\newtheorem{proposition}{Proposition}
\title{A New Series Representation Involving Root Of Unity For The Values Of Riemann Zeta Function At Integer Arguments }
\author{
  Xiaowei Wang(Potsdam)\thanks{This paper is written in July 2020}
   \\
}
\begin{document}
\maketitle

\begin{abstract}
In this paper we provide a new series representation for the values of Riemann zeta function at integer arguments, namely: $      \zeta(m)=\sum_{n=1}^{\infty}\frac{m(-1)^{n-1}\Gamma(1-\omega_{m}n)...\Gamma(1-\omega_{m}^{m-1}n)}{n!n^m}$, where $n$ is an integer that lager than $1$ and $\omega$ is the $m$-th root of unity. This series converges quite fast. It's derived by some technique of infinite partial fraction decomposition. With this technique we also establish other useful formulas related to gamma function.
\end{abstract}

\keywords{Riemann zeta function at integers \and series representation \and root of unity}

\section{Introduction}
It's well known that there are still a bunch of unsolved problems about the values of Riemann zeta function at integer arguments. For instance, except $\zeta(3)$ (It was proved that $\zeta(3)$ is irrational, but its transcendence remains unknown, see\cite{van}), the irrationality and transcendence of $\zeta(2m+1)$ are unknown. Whether $\frac{\zeta(2m+1)}{\pi^{2m+1}}$ are irrational? Is there any explicit relation between $\zeta(m_{1})$ and $\zeta(m_{2})$ for different positive integer $m_{1},m_{2}$?\\

In order to study all those problems, seeking for a new integral or series representation for $\zeta(m)$ is good direction to understand the number-theoretic properties of these constants. A typical successful example is that, F. Beukers \cite{beukers} used a special integral representation to prove the irrationality of $\zeta(3)$ in 1978.\\

In this paper we provide a new series representation of $\zeta(m)$ for all $m\in \mathbb{Z}_{\geq 2}$ by using some techniques of infinite partial fraction decomposition, namely\\
\begin{equation*}
  \zeta(m)=\sum_{n=1}^{\infty}\frac{m(-1)^{n-1}\Gamma(1-\omega_{m}n)...\Gamma(1-\omega_{m}^{m-1}n)}{n!n^m}
\end{equation*}
where $\omega_{m}=e^{2\pi i/m}$ is the $m$-th root of unity. One can prove that this series converges more faster than $\zeta(m)=\sum_{n=1}^{\infty}\frac{1}{n^m}$, but the disadvantage is that all terms are irrational.\\

\section{Main Result}
\begin{lemma}\label{lemma1}(Homogeneous partial fraction decomposition)\\
Let $a_{1},...,a_{n}$ be distinct complex number, $x\in \mathbb{C}\backslash \{a_{1},...,a_{n}\} $, then there exist $\mu_{1},...,\mu_{n}\in \mathbb{C}$ such that following identity is true,\\
\begin{equation}\label{pfd1}
  \prod_{i=1}^{n}\frac{1}{x-a_{i}}=\sum_{i=1}^{n}\frac{\mu_{i}}{x-a_{i}}
\end{equation}
where $\mu_{i}$ has explicit expression as following. They only depend on $a_{1},...,a_{n}$.
\begin{equation*}
  \mu_{i}=\prod_{j=1,j\neq i}^{n}\frac{1}{a_{i}-a_{j}}
\end{equation*}
Further, we have
\begin{equation*}
  \sum_{i=1}^{n}\mu_{i}=0
\end{equation*}
\end{lemma}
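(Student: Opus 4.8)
The plan is to reduce the claimed identity to a polynomial identity by clearing denominators, and then to exploit the fact that a polynomial of degree at most $n-1$ is completely determined by its values at $n$ distinct points. Multiplying both sides of \eqref{pfd1} by $\prod_{k=1}^{n}(a_{k}+x)$ turns the asserted decomposition into
\begin{equation*}
  1=\sum_{i=1}^{n}\mu_{i}\prod_{k=1,k\neq i}^{n}(a_{k}+x),
\end{equation*}
which is an equality between a constant and a polynomial in $x$ of degree at most $n-1$. Thus it suffices to pin down the $\mu_{i}$ so that this polynomial identity holds, and then read off the vanishing of their sum from the coefficients.

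First I would determine the coefficients by evaluating the right-hand side at the $n$ distinct points $x=-a_{i}$. At $x=-a_{i}$ every summand indexed by $j\neq i$ carries the factor $(a_{i}+x)=0$ and hence vanishes, leaving only the $j=i$ term, namely $\mu_{i}\prod_{k\neq i}(a_{k}-a_{i})$. Setting this equal to $1$ forces $\mu_{i}=\prod_{k\neq i}(a_{k}-a_{i})^{-1}$, exactly the stated formula. Conversely, with these values the polynomial $\sum_{i}\mu_{i}\prod_{k\neq i}(a_{k}+x)$ and the constant polynomial $1$ agree at the $n$ distinct nodes $-a_{1},\dots,-a_{n}$; since both have degree at most $n-1$, they coincide identically, which establishes \eqref{pfd1} and simultaneously proves existence and uniqueness of the $\mu_{i}$.

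For the vanishing of the sum I would compare the coefficient of the top power $x^{n-1}$ on the two sides of this polynomial identity. Each product $\prod_{k\neq i}(a_{k}+x)$ is monic of degree $n-1$, so the coefficient of $x^{n-1}$ on the right-hand side is precisely $\sum_{i=1}^{n}\mu_{i}$, whereas on the left-hand side (the constant $1$) it equals $0$ as soon as $n\geq 2$. Hence $\sum_{i=1}^{n}\mu_{i}=0$.

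The argument is essentially routine, so I do not expect a serious obstacle; the only point demanding care is the degree bookkeeping. In particular the vanishing of $\sum_{i}\mu_{i}$ genuinely requires $n\geq 2$ (for $n=1$ the sole coefficient is $\mu_{1}=1$), and one must verify that the $n$ interpolation nodes $-a_{1},\dots,-a_{n}$ are distinct, which is guaranteed exactly by the hypothesis that the $a_{i}$ are distinct. An alternative route, computing $\mu_{i}$ as the residue of $\prod_{k}(a_{k}+x)^{-1}$ at its simple pole $x=-a_{i}$ and extracting $\sum_{i}\mu_{i}$ from the $O(x^{-n})$ decay as $x\to\infty$, yields the same conclusions under the same hypothesis.
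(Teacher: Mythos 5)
Your proposal is correct and follows essentially the same route as the paper: clear denominators to obtain the polynomial identity $\sum_{i}\mu_{i}\prod_{k\neq i}(a_{k}+x)=1$, use the fact that a polynomial of degree at most $n-1$ agreeing with $1$ at the $n$ distinct points $-a_{1},\dots,-a_{n}$ must be identically $1$, and then compare the coefficient of $x^{n-1}$ to get $\sum_{i}\mu_{i}=0$. Your observation that the sum formula requires $n\geq 2$ is a small but genuine point of care that the paper omits.
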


\begin{proof}
In order to show (\ref{pfd1}), we multiply $\prod_{i=1}^{n} (x-a_{i})$ on both side of (\ref{pfd1}). It becomes
\begin{equation*}
 \sum_{i=1}^{n}\mu_{i}\prod_{j=1,j\neq i}^{n}(x-a_{j})=1
\end{equation*}
Now let
\begin{equation*}
  p(x)=\sum_{i=1}^{n}\mu_{i}\prod_{j=1,j\neq i}^{n}(x-a_{j})=\sum_{i=1}^{n}\prod_{j=1,j\neq i}^{n}\frac{x-a_{j}}{a_{i}-a_{j}}
\end{equation*}
It's easy to see that $p(x)$ is a polynomial with degree $n-1$ and satisfying that $p(a_{i})=1$ for all $i=1,...,n$.  On the one hand we already found $n$ zeros of $p(x)-1$, on the other hand by the fundamental theorem of algebra, $p(x)-1$ has $n-1$ zeros. Therefore the only possible case is $p(x)-1\equiv 0$. That is
\begin{equation*}
\sum_{i=1}^{n}\mu_{i}\prod_{j=1,j\neq i}^{n}(x-a_{j})\equiv1
\end{equation*}
This means (\ref{lemma1}) is true. Finally, by comparing the coefficient of $x^{n-1}$ on both sides, we obtain
\begin{equation*}
  \sum_{i=1}^{n}\mu_{i}=0
\end{equation*}
\end{proof}

\begin{lemma}
For $m=2,3,...$, define $\Phi_{m}(z)=\prod_{n=1}^{\infty}\frac{n^m}{n^m-z^m}    $, then
\begin{equation*}
    \Phi_{m}(z)=\prod_{j=0}^{m-1}\Gamma(1-\omega_{m}^{j}z)=\exp{(\sum_{k=1}^{\infty}\frac{\zeta(mk)}{k}z^{mk})}
\end{equation*}
where the second identity holds only for $|z|<1$.
\end{lemma}
\begin{proof}
Firstly, recall the infinite product expression of Gamma function
\begin{equation*}
    \Gamma(z)=z^{-1}e^{-\gamma z}\prod_{n=1}^{\infty}\frac{ne^{z/n}}{z+n}, z\in \mathbb{C}\backslash \{0,-1,-2,...\}
\end{equation*}
Let $\omega_{m}=e^{2i\pi/m}$, it's easy to see that for $j=0,1,...,m-1$ and $z\neq \omega_{m}^{-j}, 2\omega_{m}^{-j}, 3\omega_{m}^{-j},...$
\begin{equation*}
    \Gamma(1-\omega_{m}^{j}z)=e^{\gamma \omega_{m}^{j}z}\prod_{n=1}^{\infty}\frac{ne^{-\omega_{m}^{j}z/n}}{n-\omega_{m}^{j}z}
\end{equation*}
Multiplying them together, we have
\begin{align*}
    \prod_{j=0}^{m-1}\Gamma(1-\omega_{m}^{j}z)&=\exp{(\gamma z \sum \omega_{m}^{j})}\prod_{n=1}^{\infty}\prod_{j=0}^{m-1}\frac{ne^{-z/n\sum\omega_{m}^{j}}}{n-\omega_{m}^{j}z}\\
    &=\prod_{n=1}^{\infty}\prod_{j=0}^{m-1}\frac{n}{n-\omega_{m}^{j}z}\\
    &=\prod_{n=1}^{\infty}\frac{n^m}{n^m-z^m}
\end{align*}
The second required identity is in fact due to the Taylor expansion of $\log{(\Gamma)}$ around $1$. Since
\begin{equation*}
    \log{\Gamma(1-z)}=\gamma z+\sum_{k=2}^{\infty}\frac{\zeta(k)}{k}z^{k}, |z|<1
\end{equation*}
then for $j=0,1,...,m-1$
\begin{equation*}
    \Gamma(1-\omega_{m}^{j}z)=\exp{(\gamma \omega_{m}^{j}z+\sum_{k=2}^{\infty}\frac{\zeta(k)}{k}(\omega_{m}^{j}z)^{k})}, |z|<1
\end{equation*}
Multiplying them together, we obtain
\begin{equation*}
    \prod_{j=0}^{m-1}\Gamma(1-\omega_{m}^{j}z)=\exp{(\sum_{k=1}^{\infty}\frac{\zeta(mk)}{k}z^{mk})},|z|<1
\end{equation*}
\end{proof}

\begin{theorem}
For $m\in \mathbb{Z}_{\geq 2}$, the value of Riemann zeta function at integer  argument $\zeta(m)$ has following series representation\\
\begin{equation}\label{generalzeta}
  \zeta(m)=\sum_{n=1}^{\infty}\frac{m(-1)^{n-1}\Gamma(1-\omega_{m}n)...\Gamma(1-\omega_{m}^{m-1}n)}{n!n^m}
\end{equation}
where $\omega_{m}=e^{\frac{2i\pi}{m}}$ is the $m$-th root of unity.
\end{theorem}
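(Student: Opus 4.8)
The plan is to realize the right-hand side of (\ref{generalzeta}) as a sum of residues of one meromorphic function whose residue at the origin is $\zeta(m)$. By the product formula for $\Phi_m$ proved just above, near $z=0$ we have $\Phi_m(z)=\exp\!\big(\sum_{k\ge 1}\frac{\zeta(mk)}{k}z^{mk}\big)=1+\zeta(m)z^m+O(z^{2m})$, so the coefficient of $z^m$ in $\Phi_m$ is $\zeta(m)$, i.e.
\begin{equation*}
\zeta(m)=\operatorname{Res}_{z=0}\frac{\Phi_m(z)}{z^{m+1}}.
\end{equation*}
Hence it suffices to account for the remaining residues of $\Phi_m(z)/z^{m+1}$ and show they reconstruct the series.

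Next I would locate and evaluate those residues. From $\Phi_m(z)=\prod_{n=1}^{\infty}\frac{n^m}{n^m-z^m}$ the poles are simple and occur exactly at $z=k\omega_m^{-j}$ with $k\ge 1$ and $0\le j\le m-1$ (equivalently $z^m=k^m$); here Lemma~\ref{lemma1} supplies the partial-fraction decomposition of each factor $\frac{n^m}{n^m-z^m}$, with the relation $\sum_i\mu_i=0$ ensuring the $O(z^{-2})$ decay that makes the global expansion convergent. At $z_0=k\omega_m^{-j}$ only the factor $\Gamma(1-\omega_m^{j}z)$ is singular; since $\operatorname{Res}_{s=-(k-1)}\Gamma(s)=(-1)^{k-1}/(k-1)!$, the chain rule gives this factor residue $-\omega_m^{-j}(-1)^{k-1}/(k-1)!$, while the surviving factors reindex to $\prod_{l=1}^{m-1}\Gamma(1-k\omega_m^{l})$. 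Dividing by $z_0^{m+1}=k^{m+1}\omega_m^{-j}$ cancels the phase $\omega_m^{-j}$, so the residue is independent of $j$; summing over the $m$ poles on the circle $|z|=k$ and using $(k-1)!\,k^{m+1}=k!\,k^m$ yields
\begin{equation*}
\sum_{j=0}^{m-1}\operatorname{Res}_{z=k\omega_m^{-j}}\frac{\Phi_m(z)}{z^{m+1}}=-\frac{m(-1)^{k-1}}{k!\,k^m}\prod_{l=1}^{m-1}\Gamma(1-k\omega_m^{l}),
\end{equation*}
which is precisely minus the $n=k$ summand of (\ref{generalzeta}).

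I would then apply the residue theorem on the circles $|z|=N+\tfrac12$, which avoid every pole since all poles have integer modulus. This gives
\begin{equation*}
\frac{1}{2\pi i}\oint_{|z|=N+\frac12}\frac{\Phi_m(z)}{z^{m+1}}\,dz=\zeta(m)-\sum_{k=1}^{N}\frac{m(-1)^{k-1}}{k!\,k^m}\prod_{l=1}^{m-1}\Gamma(1-k\omega_m^{l}),
\end{equation*}
so the theorem reduces to showing that the left-hand side tends to $0$ as $N\to\infty$.

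The main obstacle is exactly this last step. The boundary integral is bounded in modulus by $\max_{|z|=N+1/2}|\Phi_m(z)|\big/(N+\tfrac12)^m$, so one needs a uniform polynomial bound $\max_{|z|=N+1/2}|\Phi_m(z)|=o(R^m)$ (indeed $O(R^{m-1})$ is expected) for the factorized product $\prod_{j=0}^{m-1}|\Gamma(1-\omega_m^{j}z)|$. This is delicate because along the rays $\arg z\equiv 2\pi l/m$ one Gamma factor grows polynomially while the others, having large imaginary argument, decay; the balance is governed by Stirling's asymptotics $|\Gamma(s)|\sim\sqrt{2\pi}\,|s|^{\operatorname{Re}s-1/2}e^{-\pi|\operatorname{Im}s|/2}$, the exponential factors from the off-ray directions compensating the polynomial growth on the ray. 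For $m=2$ this is the elementary estimate $\Phi_2(z)=\pi z/\sin\pi z=O(R)$ on $|z|=N+\tfrac12$; the general case is the technical heart of the argument. Once the decay $o(R^m)$ is secured, letting $N\to\infty$ annihilates the contour integral and leaves exactly (\ref{generalzeta}).
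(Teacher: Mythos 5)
Your route is genuinely different from the paper's: the paper never integrates, instead using Lemma~\ref{lemma1} to expand the truncated product $f_N(z)=\frac{1}{z}\prod_{n=1}^{N}\frac{n^m}{n^m-z^m}$ in partial fractions, letting $N\to\infty$ to obtain the expansion (\ref{phi}) of $\Phi_m$, and then reading off $\zeta(m)$ by comparing the coefficient of $z^m$ with the exponential form in Lemma 2. You instead recover each summand of (\ref{generalzeta}) as minus the sum of residues of $\Phi_m(z)/z^{m+1}$ over the circle $|z|=k$, and your residue algebra is correct: the phase $\omega_m^{-j}$ cancels, the sum over $j$ produces the factor $m$, and $(k-1)!\,k^{m+1}=k!\,k^m$. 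The problem is the step you explicitly defer: the entire proof rests on the bound $\max_{|z|=N+1/2}|\Phi_m(z)|=o(N^m)$, and you do not prove it, offering only a gesture at Stirling asymptotics. Worse, the asymptotic you quote, $|\Gamma(s)|\sim\sqrt{2\pi}\,|s|^{\operatorname{Re}s-1/2}e^{-\pi|\operatorname{Im}s|/2}$, is valid as $|\operatorname{Im}s|\to\infty$ with $\operatorname{Re}s$ bounded, whereas on your circles both the real and imaginary parts of $1-\omega_m^{j}z$ grow like $N$, so the formula cannot be applied as stated and the ray-by-ray ``balance'' you describe would have to be rebuilt from the uniform Stirling expansion. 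As written, this is a genuine gap at the decisive step.

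The gap is real but can be closed elementarily, with no asymptotics of $\Gamma$ at all. For $t>0$ and $m\ge 2$ one has $|1-t^m|\ge|1-t^2|$ (for $t<1$, $t^m\le t^2$; for $t>1$, $t^m\ge t^2$). Hence for $|z|=R=N+\frac12$, using also the reverse triangle inequality $|n^m-z^m|\ge|n^m-R^m|$,
\begin{equation*}
|\Phi_m(z)|=\prod_{n=1}^{\infty}\frac{n^m}{|n^m-z^m|}\le\prod_{n=1}^{\infty}\frac{1}{|1-(R/n)^m|}\le\prod_{n=1}^{\infty}\frac{n^2}{|n^2-R^2|}=\left|\frac{\pi R}{\sin(\pi R)}\right|=\pi R,
\end{equation*}
since $|\sin(\pi(N+\tfrac12))|=1$. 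Your contour integral is then bounded by $\frac{1}{2\pi}(2\pi R)\cdot\pi R\cdot R^{-(m+1)}=\pi R^{1-m}\to 0$ for every $m\ge 2$. With these few lines supplied, your proof is complete and rigorous --- indeed it then compares favorably with the paper's own argument, whose passage from the finite decomposition (\ref{fn}) to the infinite expansion (\ref{phi}) by taking $N\to\infty$ term by term is itself left without justification. But as submitted, your proposal omits exactly the estimate on which the theorem stands.
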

\begin{proof}
For fixed $m$, let
\begin{equation*}
    f_{N}(z)=\frac{1}{z}\prod_{n=1}^{N}\frac{n^m}{n^m-z^m}
\end{equation*}
then $\lim_{N\rightarrow \infty}f_{N}(z)=\frac{1}{z}\Phi_{m}(z)$. Now use Lemma 1 to find the coefficients of partial fraction decomposition of $f_{N}(z)$. Rewrite
\begin{equation*}
    f_{N}(z)=\frac{1}{z}\prod_{n=1}^{N}\prod_{j=0}^{m-1}\frac{-\omega_{m}^{j}n}{z-\omega_{m}^{j}}
\end{equation*}
By Lemma 1, we have\\
\begin{equation}\label{fn}
     f_{N}(z)=\frac{\lambda_{0}(N)}{z}+\sum_{n=1}^{N}\frac{\lambda_{n,j}(N)}{z-\omega_{m}^{j}}
\end{equation}
where
\begin{align*}
   & \lambda_{0}(N)\equiv 1\\
   &\lambda_{n,j}(N)=(-1)^{N}(N!)^{m}(n\omega_{m}^{j})^{-1}\prod_{(s,t)\in \Lambda_{n,j}^{N}}(-s\omega_{m}^{t}+n\omega_{m}^{j})^{-1}
\end{align*}
with
\begin{equation*}
 \Lambda_{n,j}^{N}:=\{(s,t):s=1,2,...,N;t=0,1,...,m-1;(s,t)\neq (n,j)\}   
\end{equation*}
Note that $\lambda_{n,j}$ are actually independent to $j$ for all $n=1,2,...$, that is\\
\begin{equation}\label{lambdaeq}
    \lambda_{n,0}=\lambda_{n,1}=...=\lambda_{n,m-1}
\end{equation}
The reason is following, for $j\neq 0$
\begin{align*}
    (n\omega_{m}^{j})^{-1}\prod_{(s,t)\in \Lambda_{n,j}^{N}}(-s\omega_{m}^{t}+n\omega_{m}^{j})^{-1}&=n^{-1}(\omega_{m}^{j})^{mj-1}\prod_{(s,t)\in \Lambda_{n,j}^{N}}(-s\omega_{m}^{t}+n\omega_{m}^{j})^{-1}\\
    &=n^{-1}\prod_{(s,t)\in \Lambda_{n,j}^{N}}(-s\omega_{m}^{t-j}+n)^{-1}\\
    &=n^{-1}\prod_{(s,t')\in \Lambda_{n,0}^{N}}(-s\omega_{m}^{t'}+n)^{-1}\\
\end{align*}
This manifests that (\ref{lambdaeq}) is true. Hence it remains to find $\lambda_{n,0}(N)$. For convenience we omit the index of $j$, directly denote it by $\lambda_{n}(N)$. By some simple computation,
\begin{align*}
    \lambda_{n}(N)&=\frac{-n^m}{n(n\omega_{m}-n)(n\omega_{m}^2-n)...(n\omega_{m}^{m-1}-n)}\prod_{s=1,s\neq n}^{N}\frac{s^m}{s^m-n^m}\\
    &=-\frac{1}{m}\prod_{s=1,s\neq n}^{N}\frac{s^m}{s^m-n^m}\\
\end{align*}
Therefore\\
\begin{equation}\label{lambdainf}
    \lambda_{n}:=\lim_{N\rightarrow \infty}\lambda_{n}(N)=-\frac{1}{m}\prod_{s=1,s\neq n}^{\infty}\frac{s^m}{s^m-n^m}
\end{equation}
In fact, there is another way to formulate $\lambda_{n}$.\\
\begin{align*}
    \lambda_{n}&=\lim_{z\rightarrow n}-\frac{n^m-z^m}{m n^m}\prod_{s=1}^{\infty}\frac{s^m}{s^m-z^m}\\
    &=\lim_{z\rightarrow n}-\frac{n^m-z^m}{m n^m}\Gamma(1-z)\Gamma(1-\omega_{m}z)...\Gamma(1-\omega_{m}^{m-1}z)\\
    &=\frac{(-1)^{n}}{n!}\Gamma(1-\omega_{m}n)...\Gamma(1-\omega_{m}^{m-1}n)
\end{align*}
plug into (\ref{fn}) we obtain\\
\begin{equation}\label{phi}
    \Phi_{m}(z)=1+\sum_{n=1}^{\infty}\frac{m(-1)^n \Gamma(1-\omega_{m}n)...\Gamma(1-\omega_{m}^{m-1}n)}{n!}\frac{z^m}{z^m-n^m}
\end{equation}
In order to show the normal convergence of $\Phi_{m}(z)$ in $\mathbb{C}$, consider\\
\begin{equation*}
      |\lambda_{n}|=\frac{1}{m}\prod_{s=1,s\neq n}^{\infty}|\frac{s^m}{s^m-n^m}|<\frac{1}{2}\prod_{s=1,s\neq n}^{\infty}|\frac{s^2}{s^2-n^2}|=1
\end{equation*}
The last equality is due to\\
\begin{equation*}
 -\frac{1}{2}\prod_{s=1,s\neq n}^{\infty}\frac{s^2}{s^2-n^2}=\frac{(-1)^n\Gamma(1+n) }{n!}=(-1)^n  
\end{equation*}
Therefore one can estimate (\ref{phi}) by\\
\begin{align*}
    &\sum_{n=1}^{\infty}|\frac{m(-1)^n \Gamma(1-\omega_{m}n)...\Gamma(1-\omega_{m}^{m-1}n)}{n!}\frac{z^m}{z^m-n^m}|\\< & m\sum_{n=1}^{\infty}|\frac{z^m}{n!(z^m-n^m)}|\\
    < & m M^m\sum_{n=1}^{\infty}\frac{1}{n!(n^m-M^{m})}\\
\end{align*}
for $|z|<M$. Hence $\Phi_{m}(z)$ is normally converges to a meromorphic function in $\mathbb{C}$. The poles are $n\omega_{m}^{j}$, $n=1,2,...$, $j=0,1,...,m-1$. By the normal convergence $\Phi_{m}(z)$ can be rearranged as\\
\begin{equation*}
 \Phi_{m}(z)+1+\sum_{k=1}^{\infty}\sum_{n=1}^{\infty}\frac{m(-1)^{n-1} \Gamma(1-\omega_{m}n)...\Gamma(1-\omega_{m}^{m-1}n)}{n!n^{km}} z^{km} ,|z|<1 
\end{equation*}
On the other hand, it follows from Lemma 2 that\\
\begin{equation*}
   \Phi_{m}(z)= \exp{(\sum_{k=1}^{\infty}\frac{\zeta(mk)}{k}z^{mk})}=1+\zeta(m)z^{km}+...,\text{  }|z|<1
\end{equation*}
Finally, compare the coefficients we obtain\\
\begin{equation*}
      \zeta(m)=\sum_{n=1}^{\infty}\frac{m(-1)^{n-1}\Gamma(1-\omega_{m}n)...\Gamma(1-\omega_{m}^{m-1}n)}{n!n^m}
\end{equation*}
\end{proof}

\begin{theorem}
Suppose that $r\in \mathbb{Z}^{+}$, then\\
\begin{equation}\label{powerseries}
    \frac{1}{r!}\frac{d^r}{dz^r}|_{z=0}\exp{(\sum_{k=1}^{\infty}\frac{\zeta(mk)}{k}z^{mk})}=\begin{cases}
    1, \text{ if } r=0\\
    0, \text{ if } r\neq 0 \text{ and }\mod{(r,m)}\neq 0\\
    \sum_{n=1}^{\infty}\frac{m(-1)^{n-1} \Gamma(1-\omega_{m}n)...\Gamma(1-\omega_{m}^{m-1}n)}{n!n^{rm}} , \text{ if } r\neq 0 \text{ and} \mod{(r,m)}= 0
    \end{cases}
\end{equation}
\end{theorem}
\begin{proof}
In the proof of last theorem it's shown that\\
\begin{equation*}
 \Phi_{m}(z)+1+\sum_{k=1}^{\infty}\sum_{n=1}^{\infty}\frac{m(-1)^{n-1} \Gamma(1-\omega_{m}n)...\Gamma(1-\omega_{m}^{m-1}n)}{n!n^{km}} z^{km} ,|z|<1 
\end{equation*}
Recall\\
\begin{equation*}
    \Phi_{m}(z)=\exp{(\sum_{k=1}^{\infty}\frac{\zeta(mk)}{k}z^{mk})},|z|<1 
\end{equation*}
by comparing the coefficients, one can see the result.\\
\end{proof}

\section{Some Estimates}

\begin{lemma}\label{compare}(Comparing of the coefficients)
Let $\rho_{m}(n):=\prod_{s=1,s\neq n}^{\infty}|\frac{s^m}{s^m-n^m}|=\frac{m\Gamma(1-\omega_{m}n)...\Gamma(1-\omega_{m}^{m-1}n)}{n!}$, then for any fixed $n\in \mathbb{Z}^{+}$, $\rho_{m}(n)$ is decreasing with respect to $m$, namely
\begin{equation*}
    \rho_{m+1}(n)<\rho_{m}(n)
\end{equation*}
\end{lemma}
\begin{proof}
Observe that $\frac{s^m}{s^m-n^m}=(1-(\frac{n}{s})^m)^{-1}$.\\
Case 1. For fixed $n,s$ such that $n>s$, $|(1-(\frac{n}{s})^m)^{-1}|=((\frac{n}{s})^m-1)^{-1}$ is decreasing with respect to $m$.\\
Case 2. For fixed $n,s$ such that $n<s$, $|(1-(\frac{n}{s})^m)^{-1}|=(1-(\frac{n}{s})^m)^{-1}$ is also decreasing with respect to $m$.\\
Therefore $\rho_{m}(n)=\prod_{s=1,s\neq n}^{\infty}|\frac{s^m}{s^m-n^m}|$ is decreasing with respect to $m$.\\
\end{proof}

\begin{theorem}
Assume that $2m+1$ is an odd integer lager than $3$, for the series representation
\begin{equation*}
    \zeta(2m+1)=\sum_{n=1}^{\infty}\frac{(2m+1)(-1)^{n-1}\Gamma(1-\omega_{2m+1}n)...\Gamma(1-\omega_{2m+1}^{2m}n)}{n!n^{2m+1}}=\sum_{n=1}^{\infty}\frac{(-1)^{n-1}\rho_{2m+1}(n)}{n^{2m+1}}
\end{equation*}
we have an estimate for the absolute value of each term as following. \begin{equation*}
   \frac{2(m+1)\pi^m}{e^{\pi (m+1)n}n^{m+1}}< |\frac{\rho_{2m+1}(n)}{n^{2m+1}}|<\frac{2m(4\pi)^{m-1}}{e^{\sqrt{2}mn}n^{m+2}}
\end{equation*}
where the second inequality holds for $n>0.035m$

\end{theorem}
\begin{proof}
By lemma \ref{compare} we have $\rho_{2m+2}(n)<\rho_{2m+1}(n)<\rho_{2m}(n)$. It remains to find upper bound and lower bound of $\rho_{2r}(n)$. Obviously $\rho_{2}(n)=2$ for all $n$, therefore in following we can always assuming that $r\geq 2$. Firstly realize that\\
\begin{align*}
    \Gamma(1-\omega_{2r}n)...\Gamma(1-\omega_{2r}^{2r-1}n)&=\Gamma(1+n)\prod_{j=1}^{r-1}\Gamma(1-\omega_{2r}^{j}n)\Gamma(1+\omega_{2r}^{j}n)\\
    &=n!\prod_{j=1}^{r-1}\pi\omega_{2r}^{j}n\csc(\pi\omega_{2r}^{j}n)\\
    &=n!(\pi n)^{r-1} e^{i\pi(r-1)/2} \prod_{j=1}^{r-1}\csc(\pi\omega_{2r}^{j}n)
\end{align*}
Therefore $\rho_{2r}(n)=2r(\pi n)^{r-1}\prod_{j=1}^{r-1}|\csc(\pi\omega_{2r}^{j}n)|$. It remains to estimate the product of $|\csc(.)|$. Note that for $a,b\in \mathbb{R}, b>0,a\neq 0$, $|\sin(a+bi)|=\frac{1}{2}|e^{b-ai}-e^{-b+ai}|\in (\frac{e^b-e^{-b}}{2},\frac{e^b+e^{-b}}{2})$. Hence\\
\begin{equation*}
    \frac{1}{4}e^{|\Im{(z)}|}<|\sin(z)|< e^{|\Im{(z)}|}
\end{equation*}
The first inequality holds for $|\Im{(z)}|>\frac{\log 2}{2}$, the second inequality holds for $z\in \mathbb{C}$. Therefore one can implies that\\
\begin{equation*}
    \exp(-n\pi\sum_{j=1}^{r-1}\Im{(\omega_{2r}^{j})})<\prod_{j=1}^{r-1}|\csc(\pi\omega_{2r}^{j}n)|< 4^{r-1} \exp(-n\pi\sum_{j=1}^{r-1}\Im{(\omega_{2r}^{j})})
\end{equation*}
which the second inequality holds for $n>0.035r$ (since $\frac{\log 2}{2\pi }\csc(\pi/r)>0.035 r$). On the other hand, by the formula\\
\begin{equation*}
    \sum_{j=1}^{r-1}\sin(\frac{\pi j}{r})=\frac{\sin(\frac{\pi(r-1)}{2r})}{\sin(\frac{\pi}{2r})}
\end{equation*}
It's easy to check the fact
\begin{equation*}
    \frac{\sqrt{2}}{\pi}r<\frac{\sin(\frac{\pi(r-1)}{2r})}{\sin(\frac{\pi}{2r})}< r
\end{equation*}
Therefore\\
\begin{equation*}
   \frac{1}{e^{n\pi r}}< \prod_{j=1}^{r-1}|\csc(\pi\omega_{2r}^{j}n)|< \frac{4^{r-1}}{e^{n\sqrt{2} r}}
\end{equation*}
Now let $r=m$ and $m+1$ respectively, it turns out that
\begin{equation*}
    \rho_{2m}(n)< \frac{2m(4\pi n)^{m-1}}{e^{n\sqrt{2} m}}
\end{equation*}
holds for $n>0.035 m$, and\\
\begin{equation*}
    \rho_{2m+2}(n)> \frac{2(m+1)(\pi n)^{m}}{e^{n\pi (m+1)}}
\end{equation*}
holds for all $n$. Finally, we can conclude that\\
\begin{equation*}
   \frac{2(m+1)\pi^m}{e^{\pi (m+1)n}n^{m+1}}< |\frac{\rho_{2m+1}(n)}{n^{2m+1}}|<\frac{2m(4\pi)^{m-1}}{e^{\sqrt{2}mn}n^{m+2}}
\end{equation*}
where the second inequality holds for $n>0.035m$.\\
\end{proof}
In the appendix we show some numerical comparison for those two series representations of $\zeta(m)$.\\

\section{Some Special Examples}
The case of $m=3$ is a little bit different to the case of other odd number. To see this, observe the formula (\ref{generalzeta}),\\
\begin{equation}\label{zeta3}
    \zeta(3)=\sum_{n=1}^{\infty}\frac{3(-1)^{n-1}\Gamma(1-\omega_{3}n)\Gamma(1-\omega_{3}^2 n)}{n! n^3}
\end{equation}
Since $\omega_{3}=-\frac{1}{2}+\frac{\sqrt{3}}{2}i$, $\omega_{3}^2=\bar{\omega}_{3}=-\frac{1}{2}-\frac{\sqrt{3}}{2}i$. It makes that $1-\omega_{3}n$ and $1-\omega_{3}^2n$ are conjugate and have rational real part. Moreover, their real parts are integers or half integers. The case of odd $m>3$ is totally another story. Although $1-\omega_{m}^{j}n$ and $1-\omega_{m}^{m-j}n$ are conjugate, the real part is $1-\cos(\frac{2\pi}{m})$. It cannot be a rational number for odd $m$ that larger than $3$. It's still unknown if there is some closed-form, not obvious functional equation for gamma function that is applicable for a number with irrational real part.\\
According to the particularity of the case "$m=3$" we give some variant of (\ref{zeta3}) in following.\\
\\
\begin{proposition}(Sine and the root of unity)
\begin{equation}\label{zeta3com}
 \zeta(3)=\sum_{n=1}^{\infty}\frac{3\pi(-1)^{n-1} \omega_{3}^2 (1+\omega_{3}^2 n)(2+\omega_{3}^2 n)...(n+\omega_{3}^2 n)}{n!n^2\sin(\pi \omega_{3}^2 n)}
\end{equation}
\end{proposition}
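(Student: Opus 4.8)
The plan is to start from the representation (\ref{zeta3}) and transform the two Gamma factors in each summand into the finite product and the sine appearing in (\ref{zeta3com}), using nothing beyond the functional equation and the reflection formula for $\Gamma$. The key algebraic input is that $\omega_{3}$ is a primitive cube root of unity, so $1+\omega_{3}+\omega_{3}^{2}=0$, whence $\omega_{3}=-1-\omega_{3}^{2}$. First I would use this to rewrite the argument of the first factor as $1-\omega_{3}n=1+n+\omega_{3}^{2}n$, so that $\Gamma(1-\omega_{3}n)=\Gamma(1+n+\omega_{3}^{2}n)$.

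Next, applying the functional equation $\Gamma(w+1)=w\,\Gamma(w)$ exactly $n$ times peels off one linear factor at a time, yielding $\Gamma(1+n+\omega_{3}^{2}n)=(1+\omega_{3}^{2}n)(2+\omega_{3}^{2}n)\cdots(n+\omega_{3}^{2}n)\,\Gamma(1+\omega_{3}^{2}n)$. This produces precisely the finite product $\prod_{k=1}^{n}(k+\omega_{3}^{2}n)$ visible in (\ref{zeta3com}) and leaves behind the single factor $\Gamma(1+\omega_{3}^{2}n)$ to be absorbed into the remaining Gamma factor.

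The remaining step is to combine $\Gamma(1+\omega_{3}^{2}n)$ with the second factor $\Gamma(1-\omega_{3}^{2}n)$. I would first shift one argument by the functional equation, writing $\Gamma(1-\omega_{3}^{2}n)=-\omega_{3}^{2}n\,\Gamma(-\omega_{3}^{2}n)$, and then apply the reflection formula $\Gamma(w)\Gamma(1-w)=\pi/\sin(\pi w)$ with $w=1+\omega_{3}^{2}n$, together with $\sin(\pi(1+\omega_{3}^{2}n))=-\sin(\pi\omega_{3}^{2}n)$. The two sign changes cancel, giving $\Gamma(1+\omega_{3}^{2}n)\,\Gamma(1-\omega_{3}^{2}n)=\pi\omega_{3}^{2}n/\sin(\pi\omega_{3}^{2}n)$. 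Substituting the product form and this identity back into (\ref{zeta3}) and simplifying the power of $n$, using $n^{-3}\cdot\omega_{3}^{2}n=\omega_{3}^{2}n^{-2}$, yields exactly (\ref{zeta3com}).

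I expect the only delicate point to be the sign and pole bookkeeping in this last step: one must track the factor $-\omega_{3}^{2}n$ arising from the argument shift together with the sign from $\sin(\pi+\theta)=-\sin\theta$, and verify that the reflection formula is legitimately applied, i.e. that $\omega_{3}^{2}n$ is never an integer. Since $\omega_{3}^{2}$ is genuinely non-real, $\omega_{3}^{2}n\notin\mathbb{Z}$ for every positive integer $n$, so $\sin(\pi\omega_{3}^{2}n)\neq 0$ and all the Gamma arguments avoid the nonpositive integers; this guarantees that every manipulation above is valid term by term, and convergence of the resulting series is inherited from that of (\ref{zeta3}), which is already established in the proof of the Theorem.
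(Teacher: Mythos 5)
Your proposal is correct and follows essentially the same route as the paper: both rewrite $1-\omega_3 n = 1+n+\omega_3^2 n$ via $1+\omega_3+\omega_3^2=0$, peel off the finite product $(1+\omega_3^2 n)\cdots(n+\omega_3^2 n)$ with the functional equation, and use the reflection formula to obtain $\Gamma(1+\omega_3^2 n)\Gamma(1-\omega_3^2 n)=\pi\omega_3^2 n/\sin(\pi\omega_3^2 n)$ before substituting into (\ref{zeta3}). Your explicit check that $\omega_3^2 n\notin\mathbb{Z}$ (so the sine never vanishes and no Gamma argument hits a pole) is a small point of rigor the paper leaves implicit.
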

\begin{proof}
To prove this, just need to note that $1+\omega_{3}^2 n+n=1-\omega_{3}n$, 
\begin{equation*}
  \Gamma(1-\omega_{3}^2 n)=\frac{\pi \omega_{3}^2n}{\Gamma(1+\omega_{3}^2 n)\sin(\pi \omega_{3}^2 n)}
\end{equation*}
Hence\\
\begin{align*}
  \Gamma(1-\omega_{3} n)\Gamma(1-\omega_{3}^2 n) & =\frac{\pi \omega_{3}^2n \Gamma(1-\omega_{3} n)}{\Gamma(1+\omega_{3}^2 n)\sin(\pi \omega_{3}^2 n)} \\
   & =\frac{\pi \omega_{3}^2n \Gamma(1+\omega_{3}^2 n+n)}{\Gamma(1+\omega_{3}^2 n)\sin(\pi \omega_{3}^2 n)} \\
   &=\frac{\pi \omega_{3}^2n(1+\omega_{3}^2 n)(2+\omega_{3}^2 n)...(n+\omega_{3}^2 n)}{\sin(\pi \omega_{3}^2 n)}
\end{align*}
By (\ref{zeta3}) we obtain what we need.\\
\end{proof}
\begin{proposition}(Sinh and Cosh)
\begin{equation}
    \zeta(3)=\sum_{d=1}^{\infty}\frac{3\pi P(d)}{8(2d-1)!(d-\frac{1}{2})^3\cosh{(\sqrt{3}\pi(d-\frac{1}{2}))}}-\sum_{d=1}^{\infty}\frac{3\sqrt{3}\pi Q(d)}{8(2d)!d^2\sinh{(\sqrt{3}\pi d)}}
\end{equation}
where\\
\begin{align*}
&P(d)=\prod_{k=1}^{d}((k-\frac{1}{2})^2+3(d-\frac{1}{2})^2)\\
&Q(d)=\prod_{k=1}^{d}(k^2+3d^2)
\end{align*}
\end{proposition}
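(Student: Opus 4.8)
The plan is to start from the sine representation (\ref{zeta3com}) of Proposition 1 and split the summation index $n$ according to its parity, writing $n=2d-1$ for the odd terms and $n=2d$ for the even terms. I expect the odd terms to assemble into the first ($\cosh$) sum and the even terms into the second ($\sinh$) sum. Throughout I would use the polar form $\omega_{3}^{2}=e^{-2\pi i/3}$ and record that $\omega_{3}^{2}n=-\tfrac{n}{2}-\tfrac{\sqrt3}{2}n\,i$, so that every quantity appearing in (\ref{zeta3com}) can be put into either rectangular or polar coordinates as convenient.

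First I would simplify the denominator. Applying $\sin(x+iy)=\sin x\cosh y+i\cos x\sinh y$ with $x=-\tfrac{\pi n}{2}$ and $y=-\tfrac{\sqrt3}{2}\pi n$, the parity of $n$ annihilates one of the two pieces: for $n=2d-1$ one has $\cos x=0$, giving $\sin(\pi\omega_{3}^{2}n)=(-1)^{d}\cosh(\tfrac{\sqrt3}{2}\pi(2d-1))$, while for $n=2d$ one has $\sin x=0$, giving $\sin(\pi\omega_{3}^{2}n)=-i(-1)^{d}\sinh(\sqrt3\,\pi d)$. This already explains the appearance of $\cosh$ in the odd sum and $\sinh$ in the even sum and pins down the sign pattern $(-1)^{d}$.

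Next I would evaluate the numerator product $\omega_{3}^{2}\prod_{k=1}^{n}(k+\omega_{3}^{2}n)$. Each factor equals $(k-\tfrac{n}{2})-\tfrac{\sqrt3}{2}n\,i$, so all factors share the same imaginary part and differ only in their real parts $r_{k}=k-\tfrac{n}{2}$, which are symmetric about $0$. Hence the factors pair off as $(r-bi)(-r-bi)=-(r^{2}+b^{2})$, a real number, with $b=\tfrac{\sqrt3}{2}n$. In the odd case $n=2d-1$ these pairs reproduce exactly the lower entries of $P(d)$, one factor $(d-\tfrac12)-bi=(2d-1)e^{-i\pi/3}$ is left unpaired, and multiplying it by $\omega_{3}^{2}=e^{-2\pi i/3}$ collapses it to the real value $-(2d-1)$; together with the identity $(d-\tfrac12)^{2}+\tfrac34(2d-1)^{2}=(2d-1)^{2}$ (the missing top factor of $P(d)$) this produces the power $(2d-1)^{3}$ in the denominator. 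In the even case $n=2d$ two factors are left over, the purely imaginary $-\sqrt3\,d\,i$ coming from $r_{d}=0$ and the factor $2d\,e^{-i\pi/3}$ coming from $r_{2d}=d$; multiplied by $\omega_{3}^{2}$ these combine to $2\sqrt3\,d^{2}i$, while the paired factors give $\prod_{j=1}^{d-1}(j^{2}+3d^{2})=Q(d)/(4d^{2})$. Substituting these evaluations into (\ref{zeta3com}) term by term and cancelling the factors of $i$, $(-1)^{d}$ and the residual powers of $d$ against the denominator should yield precisely the two stated sums.

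The hard part will be the bookkeeping in the even case, where the numerator product is purely imaginary and the denominator also carries a factor of $i$: reality of each term hinges on these cancelling exactly, so one must track the signs coming from the $d-1$ pairs, the $r=0$ factor, and the $\omega_{3}^{2}$ phase simultaneously. I would organise this by reducing every complex factor to polar form $re^{i\theta}$ before multiplying, since it is the clean cancellation $e^{-i\pi/3}\cdot e^{-2\pi i/3}=e^{-i\pi}=-1$ that makes the unpaired contributions real, and then verify the two elementary identities $(d-\tfrac12)^{2}+\tfrac34(2d-1)^{2}=(2d-1)^{2}$ and $d^{2}+3d^{2}=4d^{2}$ that convert the full products $P(d)$ and $Q(d)$ into the truncated products which actually arise.
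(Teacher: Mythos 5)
Your proposal is correct: the parity evaluation of $\sin(\pi\omega_{3}^{2}n)$, the pairing of the linear factors, the two identities $(d-\tfrac12)^{2}+\tfrac34(2d-1)^{2}=(2d-1)^{2}$ and $d^{2}+3d^{2}=4d^{2}$, and the final cancellation of the signs and factors of $i$ all check out and reproduce both stated sums. However, your route is organized differently from the paper's. The paper does not pass through Proposition 1 at all: it goes back to (\ref{zeta3}), writes $\Gamma(1-\omega_{3}n)\Gamma(1-\omega_{3}^{2}n)=\Gamma(1+\tfrac{n}{2}+\tfrac{\sqrt{3}n}{2}i)\Gamma(1+\tfrac{n}{2}-\tfrac{\sqrt{3}n}{2}i)$ as a product of \emph{conjugate} gamma values, and then for each parity quotes the closed forms $\Gamma(1+d+\sqrt{3}di)\Gamma(1+d-\sqrt{3}di)=\sqrt{3}\pi d\,Q(d)/\sinh(\sqrt{3}\pi d)$ and $\Gamma(\tfrac12+d+\tfrac{\sqrt{3}}{2}(2d-1)i)\Gamma(\tfrac12+d-\tfrac{\sqrt{3}}{2}(2d-1)i)=\pi P(d)/\cosh(\sqrt{3}\pi(2d-1)/2)$, which follow in one stroke from the gamma recursion together with the reflection formulas $\Gamma(1+w)\Gamma(1-w)=\pi w/\sin(\pi w)$ and $\Gamma(\tfrac12+v)\Gamma(\tfrac12-v)=\pi/\cos(\pi v)$ at purely imaginary $w,v$. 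You instead start from (\ref{zeta3com}), where the reflection formula has already been applied once in an asymmetric way, and then recover by hand, through polar forms and factor pairing, the conjugate symmetry that the paper's formulation exploits automatically; this is exactly why your even case needs a delicate cancellation of factors of $i$, which in the paper's version never arise because every term is manifestly real from the start. So the two arguments are the same underlying mathematics packaged differently: yours is more elementary after Proposition 1 (nothing beyond complex arithmetic is needed) but costs heavier sign and phase bookkeeping, while the paper's conjugate-pair route is shorter and cleaner. One small imprecision to fix in your write-up: the real parts $r_{k}=k-\tfrac{n}{2}$ are symmetric about $\tfrac12$, not about $0$; the pairing you then carry out, with the leftover factor $d-\tfrac12$ in the odd case and the leftovers $0$ and $d$ in the even case, is nevertheless the correct one.
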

\begin{proof}
Since\\
\begin{equation*}
\Gamma(1-\omega_{3}n)\Gamma(1-\omega_{3}^2n)=\Gamma(1+\frac{n}{2}+\frac{\sqrt{3}n}{2}i)\Gamma(1+\frac{n}{2}-\frac{\sqrt{3}n}{2}i)
\end{equation*}
for $n=2d$, it becomes\\
\begin{equation*}
    \Gamma(1+d+\sqrt{3}di)\Gamma(1+d-\sqrt{3}di)=\frac{\sqrt{3}\pi d Q(d)}{\sinh{(\sqrt{3}\pi d)}}
\end{equation*}\\
On the other hand, for $n=2d-1$, it becomes\\
\begin{equation*}
     \Gamma(\frac{1}{2}+d+\frac{\sqrt{3}}{2}(2d-1)i)\Gamma(\frac{1}{2}+d-\frac{\sqrt{3}}{2}(2d-1)i)=\frac{\pi P(d)}{\cosh{(\sqrt{3}\pi(d-\frac{1}{2})})}
\end{equation*}
plug into (\ref{zeta3}) we get what required.
\end{proof}

\begin{example}
According to (\ref{powerseries}), one has\\
\begin{equation*}
\zeta(3)^2+\zeta(6)=\sum_{n=1}^{\infty}\frac{6(-1)^{n-1} \Gamma(1-\omega_{3}n)\Gamma(1-\omega_{3}^{2}n)}{n!n^{6}}
\end{equation*}
\end{example}

\begin{example}
For $m=4$, the representation formula (\ref{generalzeta}) becomes\\
\begin{equation*}
    \zeta(4)=\sum_{n=1}^{\infty}\frac{4(-1)^{n-1}\Gamma(1-in)\Gamma(1+in)}{n^4}=4\pi\sum_{n=1}^{\infty}\frac{(-1)^{n-1} }{n^3\sinh(\pi n ) }
\end{equation*}
By some short computation one has\\
\begin{equation*}
\frac{\pi^3}{360}=\sum_{n=1}^{\infty}\frac{(-1)^{n-1}}{n^3\sinh(\pi n)}
\end{equation*}
In fact, there is a similar identity derived by Ramanujan \cite{rama}\\
\begin{equation*}
\frac{7\pi^3}{180}=\sum_{n=1}^{\infty}\frac{\coth(\pi n)}{n^3}
\end{equation*}
It seems that there are some connection between them, but whether there is an elementary arithmetic relation is yet unknown.\\

\end{example}

\section{The product of Gamma Function}
This section is relative independent to the preceding results, here we discuss the infinite partial fraction decomposition of some product of Gamma function. At first, we establish following useful formula.
\begin{lemma}\label{lemma3}(Partial Fraction Summation)\\
Let $F(z)=\prod_{n=1}^{\infty}(1-(\frac{z}{a_{n}})^2)$ where series $(a_{n})_{n}$ is an complex sequence satisfying that: \\
1, $a_{n}\neq 0$ for all $n$,\\
2, $a_{i}\neq a_{j}$ for all $i\neq j$,\\
3, $\sum \frac{1}{|a_{n}|^2}$ converges,\\
then the following identity is true for all positive integer $N$\\
\begin{equation*}
\sum_{n=1}^{N}\frac{1}{a_{n}^2}=\sum_{n=1}^{N} \prod_{s=1,s\neq n}^{N}\frac{a_{s}^2}{a_{s}^2-a_{n}^2}\frac{1}{a_{n}^2}
\end{equation*}
Moreover, the identity also holds for $N=\infty$, namely\\
\begin{equation*}
\sum_{n=1}^{\infty}\frac{1}{a_{n}^2}=\sum_{n=1}^{\infty} \frac{-2\lambda_{n}}{a_{n}^2}
\end{equation*}
where\\
\begin{equation*}
 \lambda_{n}=-\frac{1}{2}\prod_{s=1,s\neq n}^{\infty}\frac{a_{s}^2}{a_{s}^2-a_{n}^2}=\frac{1}{a_{n}F'(a_{n})}
\end{equation*}
\end{lemma}
\begin{proof}
For finite $N$ the identity is just a simple corollary of lemma \ref{lemma1}. It can be easily proved by consider the partial fraction decomposition of $\prod_{n=1}^{N}(1-(\frac{z}{a_{n}})^2)^{-1}$.\\
In follow we discuss the case $N\rightarrow \infty$. Let $G(z)=(zF(z))^{-1}$, then\\
\begin{equation*}
  G(z)=\frac{1}{z}\prod_{n=1}^{\infty}\frac{a_{n}^2}{a_{n}^2-z^2}=\frac{1}{z}\exp(-\sum_{n=1}^{\infty}\log(1-(\frac{z}{a_{n}})^2))
\end{equation*}
Due to the convergence of $\sum \frac{1}{|a_{n}|^2}$, one has the following Taylor expansion of $-\log(1-(\frac{z}{a_{n}})^2)$ around $z=0$ in $|z|<\inf{|a_{n}|}$.\\
\begin{equation*}
-\log(1-(\frac{z}{a_{n}})^2)=\sum_{k=1}^{\infty}\sum_{n=1}^{\infty}\frac{1}{ka_{n}^{2k}}z^{2k}\\
\end{equation*}
This makes\\
\begin{equation}\label{ek}
  G(z)=\frac{1}{z}\sum_{k=0}^{\infty}E_{2k}z^{2k}
\end{equation}
where\\
\begin{equation*}
   E_{r}=\frac{1}{r!}\frac{d^r}{dz^r}|_{z=0} exp(-\sum_{n=1}^{\infty}\log(1-(\frac{z}{a_{n}})^2))
\end{equation*}
i.e.\\
\begin{align*}
   & E_{0}=1 \\
   & E_{2}=\sum_{n=1}^{\infty}\frac{1}{a_{n}^2}\\
   & E_{4}=\frac{1}{2}(\sum_{n=1}^{\infty}\frac{1}{a_{n}^2})^2+\frac{1}{2}\sum_{n=1}^{\infty}\frac{1}{(a_{n})^4}\\
   ...
\end{align*}
On the other hand, Let $N\in \mathbb{Z}^{+}$, consider the approximation\\
\begin{equation*}
    G_{N}(z)\rightarrow G(z),(N\rightarrow \infty)
\end{equation*}
where\\
\begin{equation*}
     G_{N}(z)=\frac{\lambda_{0}(N)}{z}+\sum_{n=1}^{N}\frac{\lambda_{+n}(N)}{z-a_{n}}+\frac{\lambda_{-n}(N)}{z+a_{n}}
\end{equation*}
Note that $G_{N}(z)$ is odd function, therefore $\lambda_{+n}(N)=\lambda_{-n}(N)$. Let them denoted by $\lambda_{n}(N)$. By lemma \ref{lemma1} we have\\
\begin{equation*}
\lambda_{n}(N)=-\frac{1}{2}\prod_{s=1,s\neq n}^{N}\frac{a_{s}^2}{a_{s}^2-a_{n}^2}
\end{equation*}
and $\lambda_{0}(N)=1$ for all $N$. $\lambda_{n}(N)$ can be rewritten as\\

 If $\lambda_{n}:=\lim_{N \rightarrow \infty}\lambda_{n}(N)$ exist, then\\
\begin{align*}
  G(z)&=\frac{1}{z}+\sum_{n=1}^{\infty}\lambda_{n}(\frac{1}{z+a_{n}}+\frac{1}{z-a_{n}})\\
\end{align*}
 It still follows from the convergence of $\sum \frac{1}{|a_{n}|^2}$, we have
\begin{equation}\label{gexp2}
  G(z)=\frac{1}{z}+\sum_{n=1}^{\infty}\lambda_{n}(\frac{1}{z+a_{n}}+\frac{1}{z-a_{n}})
  =\frac{1}{z}-\sum_{n=1}^{\infty} \frac{2\lambda_{n}}{a_{n}^2}z-\sum_{n=1}^{\infty} \frac{2\lambda_{n}}{a_{n}^4}z^3-\sum_{n=1}^{\infty} \frac{2\lambda_{n}}{a_{n}^6}z^5-...\\
\end{equation}
As for showing that $\lambda_{n}=\frac{1}{a_{n}F'(a_{n})}$, just need to note\\
\begin{equation*}
    \lambda_{n}=-\frac{1}{2}\prod_{s=1,s\neq n}^{\infty}\frac{a_{s}^2}{a_{s}^2-a_{n}^2}=-\frac{1}{2}\lim_{z-a_{n}\rightarrow 0}\frac{a_{n}^2-z^2}{a_{n}^2}\frac{1}{F(z)}=\frac{1}{a_{n}F'(a_{n})}
\end{equation*}

Finally, comparing the coefficients of (\ref{ek}) and (\ref{gexp2}), we obtain\\
\begin{equation*}
\sum_{n=1}^{\infty}\frac{1}{a_{n}^2}=\sum_{n=1}^{\infty} \frac{-2}{F'(a_{n})}\frac{1}{a_{n}^3}
\end{equation*}
\end{proof}

\begin{lemma}\label{lemma4}
The following identity is true.\\
\begin{equation*}
  \frac{\Gamma(a+z)\Gamma(a-z)}{\Gamma(a)^2}=\prod_{n=1}^{\infty}(1-(\frac{z}{a+n-1})^2)^{-1}
\end{equation*}
\end{lemma}
\begin{proof}
See \cite{htf}
\end{proof}

\begin{theorem}(Partial Fraction Decomposition Formula of Gamma Function)\\ Following identity holds for $\Re{(a)}\leq 1$, $|z|<|a|$.\\
\begin{equation}\label{dbgamma}
\Gamma(a+z)\Gamma(a-z)=\Gamma(a)^2+\sum_{k=0}^{\infty}\frac{(-1)^{k-1} \Gamma(2a+k) }{(a+k)k!}\frac{2z^2}{z^2-(a+k)^2}
\end{equation}
\end{theorem}
\begin{proof}
Suppose that $\Re{(a)}\leq 1$, $|z|<|a|$, let\\
\begin{equation*}
    F(z)=\prod_{n=1}^{\infty}(1-(\frac{z}{a_{n}})^2)
\end{equation*}
where $a_{n}=a+n-1$. Then by Lemma \ref{lemma4} we have $F(z)=\frac{\Gamma(a)^2}{\Gamma(a+z)\Gamma(a-z)}$. Now we let $G(z)=(zF(z))^{-1}$. Then the infinite partial fraction decomposition of $G$ is\\
\begin{equation*}
  G(z)=\frac{1}{z}+\sum_{n=1}^{\infty}\lambda_{n}(\frac{1}{z-a_{n}}+\frac{1}{z+a_{n}})
\end{equation*}
One can check\\
\begin{align*}
  F'(z) & =\Gamma(a)^2 (\frac{1}{\Gamma(a+z)\Gamma(a-z)}-\frac{z}{\Gamma(a+z)\Gamma(a-z)}(\psi(a+z)-\psi(a-z))) \\
\end{align*}
Hence
\begin{align*}
  \lambda_{n}=\frac{1}{a_{n}F'(a_{n})}=\frac{(-1)^n \Gamma(2a+n-1) }{\Gamma(a)^2 (a+n-1)(n-1)!}
\end{align*}
Let $k=n-1$, finally we obtain\\
\begin{equation}\label{gammap}
\Gamma(a+z)\Gamma(a-z)=\Gamma(a)^2+\sum_{k=0}^{\infty}\frac{(-1)^{k-1} \Gamma(2a+k) }{(a+k)k!}\frac{2z^2}{z^2-(a+k)^2}
\end{equation}
In order to give the convergence region of the series, just need to note that $|\Gamma(x+k)|<|\Gamma(\Re{(x)}+k)|$. If $\Re{(2a)}\leq1$, then\\
\begin{equation*}
    |\frac{(-1)^{k-1} \Gamma(2a+k) }{(a+k)k!}\frac{2z^2}{z^2-(a+k)^2}|\leq |\frac{2z^2}{a(z^2-a^2)}|
\end{equation*}
Therefore the series converges in $|z|<|a|$.\\
\end{proof}

\begin{corollary}
Suppose that $a\neq 0$, $0\leq Re(a)\leq 1$, then
\begin{equation*}
\zeta(2,a)=\sum_{k=0}^{\infty}\frac{2(-1)^{k} \Gamma(2a+k) }{\Gamma(a)^2 k! (a+k)^3}
\end{equation*}
where $\zeta(.,.)$ is Hurwitz zeta function.
\end{corollary}
\begin{proof}
It can be easily checked by applying Lemma \ref{lemma3}
\end{proof}

\begin{remark}
While $a=1$, the formula (\ref{dbgamma}) becomes\\
\begin{equation*}
    \Gamma(1+z)\Gamma(1-z)=1+\sum_{k=0}^{\infty}\frac{2(-1)^{k-1}z^2}{z^2-(1+k)^2}
\end{equation*}
This coincides with the partial fraction formula of $\pi z\csc(\pi z)$.\\
\end{remark}

\section{Acknowledgement}
Ich m\"{o}chte mich bei den Leuten bedanken, die im 2012/2013 mich online verleumdet hatten. Diese ungerechte Worte sind mir noch deutlich errinnerlish. Diese ungerechte Worte gaben mir Antrieb und machten mir ununterbrochen weiterkommen.

\bibliographystyle{unsrt}  


\newpage
\appendix
\section{Appendix}
In this appendix we provide some numerical comparison of those two series representations of $\zeta(n)$. We adopt the following notations. 
\begin{equation*}
    \zeta(m)=\sum_{n=1}^{\infty}A_{m}(n)=\sum_{n=1}^{\infty}(-1)^{n-1}B_{m}(n)
\end{equation*}
where
\begin{align*}
    &A_{m}(n)=\frac{1}{n^m}\\
    &B_{m}(n)=\frac{\rho_{m}(n)}{n^m}=\frac{m\Gamma(1-\omega_{m}n)...\Gamma(1-\omega_{m}^{m-1}n)}{n!n^m}\\
\end{align*}
And define\\
\begin{align*}
    &RA_{m}(n)=\zeta(m)-\sum_{k=1}^{n}\frac{1}{k^m}\\
    &RB_{m}(n)=\zeta(m)-\sum_{k=1}^{n}(-1)^{k-1}B_{m}(k)\\
\end{align*}
\begin{center}
    Table 1. Comparison for the terms and errors when $m=3$
\end{center}
\centering
\begin{tabular}{|c|c|c|c|c|}
\hline
$n$  & $A_{3}(n)$ & $B_{3}(n)$ & $RA_{3}(n)$ & $RB_{3}(n)$\\ \hline
$1$  & $1.000$  & $1.235$ & $2.021 \times 10^{-1}$ & $-3.343 \times 10^{-2}$ \\ \hline
$2$  & $1.250 \times 10^{-1}$  & $3.537 \times 10^{-2}$ & $7.706 \times 10^{-2}$ &  $1.939 \times 10^{-3}$\\ \hline
$3$  & $3.704 \times 10^{-2}$ & $2.091 \times 10^{-3}$ & $4.002 \times 10^{-2}$ & $-1.520 \times 10^{-4}$\\ \hline
$4$  & $1.562 \times 10^{-2}$ &  $1.660 \times 10^{-4}$ & $2.439 \times 10^{-2}$ & $1.405 \times 10^{-5}$\\ \hline
$5$  & $8.000 \times 10^{-3}$ &  $1.550 \times 10^{-5}$ & $1.639 \times 10^{-2}$ & $-1.442 \times 10^{-6}$\\ \hline
$6$  & $4.630 \times 10^{-3}$ &  $1.601 \times 10^{-6}$ & $1.177 \times 10^{-2}$ & $1.591 \times 10^{-7}$\\ \hline
$7$  & $2.915 \times 10^{-3}$ &  $1.776 \times 10^{-7}$ & $8.850 \times 10^{-3}$ & $-1.850 \times 10^{-8}$\\ \hline
$10$  & $1.000 \times 10^{-3}$ &  $3.155 \times 10^{-10}$ & $4.525 \times 10^{-3}$ & $3.584 \times 10^{-11}$\\ \hline
$20$  & $1.250 \times 10^{-4}$ &  $7.399 \times 10^{-19}$ & $1.189 \times 10^{-3}$ & $9.325 \times 10^{-20}$\\ \hline
$50$ & $8.000 \times 10^{-6}$ &  $1.748 \times 10^{-43}$ & $1.960 \times 10^{-4}$ & $2.348 \times 10^{-44}$\\ \hline
$100$ & $1.000 \times 10^{-6}$ &  $1.270 \times 10^{-83}$ & $4.950 \times 10^{-5}$ & $1.743 \times 10^{-84}$\\ \hline
\end{tabular}
\begin{center}
    Table 1. Comparison for the terms and errors when $m=5$
\end{center}
\centering
\begin{tabular}{|c|c|c|c|c|}
\hline
$n$  & $A_{5}(n)$ & $B_{5}(n)$ & $RA_{5}(n)$ & $RB_{5}(n)$\\ \hline
$1$  & $1.000$  & $1.038$ & $3.693 \times 10^{-2}$ & $-1.217 \times 10^{-3}$ \\ \hline
$2$  & $3.125 \times 10^{-1}$  & $1.221 \times 10^{-3}$ & $5.678 \times 10^{-3}$ &  $3.895 \times 10^{-6}$\\ \hline
$3$  & $4.115 \times 10^{-2}$ & $3.914 \times 10^{-6}$ & $1.563 \times 10^{-3}$ & $-1.883 \times 10^{-8}$\\ \hline
$4$  & $9.766 \times 10^{-3}$ &  $1.894 \times 10^{-8}$ & $5.860 \times 10^{-4}$ & $1.141 \times 10^{-10}$\\ \hline
$5$  & $3.200 \times 10^{-4}$ &  $1.149 \times 10^{-10}$ & $2.660 \times 10^{-4}$ & $-7.979 \times 10^{-13}$\\ \hline
$6$  & $1.286 \times 10^{-4}$ &  $8.041 \times 10^{-13}$ & $1.374 \times 10^{-4}$ & $6.159 \times 10^{-15}$\\ \hline
$7$  & $5.950 \times 10^{-4}$ &  $6.210 \times 10^{-15}$ & $7.787 \times 10^{-5}$ & $-5.110 \times 10^{-17}$\\ \hline
$10$  & $1.000 \times 10^{-5}$ &  $4.142 \times 10^{-21}$ & $2.041 \times 10^{-5}$ & $3.892 \times 10^{-23}$\\ \hline
$20$  & $3.125 \times 10^{-7}$ &  $6.089 \times 10^{-41}$ & $1.413 \times 10^{-6}$ & $6.724 \times 10^{-43}$\\ \hline
$50$ & $3.200 \times 10^{-9}$ &  $1.134 \times 10^{-98}$ & $3.843 \times 10^{-8}$ & $1.385 \times 10^{-100}$\\ \hline
$100$ & $1.000 \times 10^{-10}$ &  $1.276 \times 10^{-193}$ & $2.450 \times 10^{-9}$ & $1.612 \times 10^{-195}$\\ \hline
\end{tabular}

Xiaowei Wang(\begin{CJK}{UTF8}{gbsn}王骁威\end{CJK})\\
Institut f\"{u}r Mathematik, Universit\"at  Potsdam, Potsdam OT Golm, Germany\\
 Email: \texttt{xiawang@gmx.de}

\end{document}